\documentclass[10pt]{amsart}


\usepackage{a4}
\usepackage{amssymb}
\usepackage{amscd}


\newtheorem{thm}{Theorem}[section]

\newtheorem{cor}[thm]{Corollary}

\theoremstyle{definition}

\newtheorem{rmk}[thm]{Remark}
\newtheorem{exa}[thm]{Example}

\def\quod{\hskip 0.5em\relax }

\newcommand{\Z}{\mathbf{Z}}
\newcommand{\Q}{\mathbf{Q}}
\newcommand{\R}{\mathbf{R}}

\newcommand{\Mm}[2]{\mathcal M_{{#1},{#2}}}
\newcommand{\Mmb}[2]{\overline{\mathcal M}_{{#1},{#2}}}
\newcommand{\Mmd}[2]{\mathcal M^{\delta}_{{#1},{#2}}}


\begin{document}
\pagestyle{plain}

\title{Inversion of series and the cohomology of the moduli spaces $\Mmd{0}{n}$.}

\author{Jonas Bergstr\"om}
\email{o.l.j.bergstrom@uva.nl}
\address{Korteweg-de Vries Instituut, Universiteit van Amsterdam, Plantage \newline Muidergracht 24, 1018 TV Amsterdam, The Netherlands.}
\author{Francis Brown}
\email{brown@math.jussieu.fr}
\address{CNRS and  Institut Math\'ematiques de Jussieu. \newline
 175 rue du Chevaleret,  75013 Paris, France.}

\maketitle


\begin{abstract}
For $n\geq 3$,  let $\Mm{0}{n}$ denote the moduli space of genus 0
curves with $n$ marked points, and $\Mmb{0}{n}$ its smooth
compactification. A theorem due to Ginzburg, Kapranov and Getzler states that
the
 inverse of the  exponential generating series for the Poincar\'e polynomial of
$H^{\bullet}(\Mm{0}{n})$ is given by the corresponding series for
$H^{\bullet}(\Mmb{0}{n})$. In this paper, we prove that the inverse
of the  ordinary generating series for the Poincar\'e polynomial of
$H^{\bullet}(\Mm{0}{n})$ is given by the corresponding series for
$H^{\bullet}(\Mmd{0}{n})$, where $\Mm{0}{n}\subset \Mmd{0}{n}
\subset \Mmb{0}{n}$ is a certain smooth affine scheme.  
\end{abstract}

\section{Introduction}
For $n\geq 3$, let  $\Mm{0}{n}$ be the moduli space, defined over
$\Z$,  of smooth $n$-pointed curves of genus zero, and let
$\Mm{0}{n}\subset \Mmb{0}{n}$ denote  its smooth compactification,
due to Deligne-Mumford and Knudsen. In \cite{Br}, an intermediary
space $\Mmd{0}{n}$, which satisfies
$$\Mm{0}{n}\subset \Mmd{0}{n}
\subset \Mmb{0}{n},$$ 
was defined 
in terms  of explicit polynomial
equations. It is a smooth affine scheme over $\Z$.
The automorphism group of $\Mm{0}{n}$ is the symmetric group
$\mathfrak{S}_n$ permuting the $n$ marked points, and this gives
rise to a decomposition (see \cite{Br}),
$$\Mmb{0}{n}=\bigcup_{\sigma\in \mathfrak{S}_n} \sigma(\Mmd{0}{n})\ .$$
Thus $\Mmd{0}{n}$ defines a symmetric set of canonical affine charts
for  $\Mmb{0}{n}$.

In this note, we compute the dimensions
$$a_{n,i}:=\dim_\Q H^i(\Mmd{0}{n};\Q)$$
of the de Rham cohomology of $\Mmd{0}{n}$ for all $i$ and $n$. 
Our main result can be expressed
in terms of generating series, as follows. If $X$ is a  smooth
scheme  over $\Q$ of dimension $d$, we will denote its Euler
characteristic  (or rather,  Poincar\'e polynomial)  by:
\begin{equation} \nonumber
e(X) (q)= \sum_{i} (-1)^i \dim_\Q H^i(X;\Q)\, q^{d-i}\ .
\end{equation}
Consider the exponential generating series:
\begin{eqnarray}
g(x)&:=& x- \sum_{n=2}^{\infty}  e(\Mm{0}{n+1})(q^2)\, \frac{x^n}{n!} \ , \nonumber \text{\,} \\
\overline{g}(x)&:=& x + \sum_{n=2}^{\infty}  e(\Mmb{0}{n+1})(q)\, \frac{x^n}{n!} \ . \nonumber  \\
\nonumber
\end{eqnarray}
The following  formula is due to Ginzburg-Kapranov (\cite{GK},
theorem $3.3.2$) and Getzler (\cite{Ge}, \S5.8)
\begin{equation} \label{GK} \overline{g}(g(x)) = g(\overline{g}(x)) = x\ .\end{equation} 
In this paper, we will consider the ordinary generating series:
\begin{eqnarray}
f(x)&:=& x- \sum_{n=2}^{\infty}  e(\Mm{0}{n+1})(q) \,{x^n}  \nonumber\  , \\
f_{\delta}(x)&:=& x + \sum_{n=2}^{\infty}  e(\Mmd{0}{n+1})(q)\, {x^n }  \nonumber \ . \\
\nonumber
\end{eqnarray}

\begin{thm} \label{invthm} The following inversion formula holds:
\begin{equation} \label{mainid} f(f_{\delta}(x)) =
f_{\delta}(f(x))= x\ .\end{equation}
\end{thm}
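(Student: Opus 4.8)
The plan is to compute the Poincar\'e polynomials $e(\Mmd{0}{n})(q)$ geometrically, by decomposing $\Mmd{0}{n}$ into the locally closed boundary strata of $\Mmb{0}{n}$ that it contains, and then to recognise the resulting ordinary generating series $f_\delta$ as the compositional inverse of $f$ by a series-inversion (Lagrange-type) argument over polygon dissections. This runs parallel to the proof of the Ginzburg--Kapranov--Getzler formula \eqref{GK}: there the \emph{symmetric} structure of $\Mmb{0}{n}$ produces exponential generating series, whereas here the \emph{dihedral} structure underlying $\Mmd{0}{n}$ produces ordinary ones. Because the spaces and the normalisation of $q$ differ, I do not expect \eqref{GK} to imply \eqref{mainid} directly; the argument will be analogous rather than deductive.

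First I would recall from \cite{Br} that, for the fixed dihedral structure $\delta$, the scheme $\Mmd{0}{n}$ is obtained from $\Mmb{0}{n}$ by deleting the boundary divisors $D_S$ for which $S$ is not a set of $\delta$-consecutive marked points (the ``crossing'' divisors). A locally closed boundary stratum of $\Mmb{0}{n}$ has a fixed nodal (dual-tree) type $\Gamma$ and is disjoint from every divisor $D_c$ with $c\notin\Gamma$; hence the strata lying in $\Mmd{0}{n}$ are precisely those whose type $\Gamma$ uses only $\delta$-consecutive chords, i.e.\ those indexed by the dissections of the $n$-gon into sub-polygons. Each such stratum is a product $\prod_{c}\Mm{0}{m_c}$, with one factor $\Mm{0}{m_c}$ for each cell with $m_c$ sides. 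Since $\Mm{0}{m}$ is a complement of hyperplanes, its cohomology is pure Tate with $H^i$ of weight $2i$, so that
\[
e(\Mm{0}{m})(q)=\#\Mm{0}{m}(\Fq)=\prod_{j=2}^{m-2}(q-j),
\]
and by K\"unneth the stratum has $e\bigl(\prod_c\Mm{0}{m_c}\bigr)(q)=\prod_c e(\Mm{0}{m_c})(q)$.

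The key geometric step is then the stratum identity
\[
e(\Mmd{0}{n})(q)=\sum_{\Gamma}\ \prod_{c}e(\Mm{0}{m_c})(q),
\]
the sum running over dissections $\Gamma$ of the $n$-gon. For \emph{point counts} over $\Fq$ this is immediate additivity; to upgrade it to the Poincar\'e polynomial I must show that $H^\bullet(\Mmd{0}{n})$ is again pure Tate with $H^k$ of weight $2k$, so that $e(\Mmd{0}{n})(q)=\#\Mmd{0}{n}(\Fq)$ and the Betti numbers $a_{n,i}$ can be read off. I expect this purity to be the main obstacle: it should follow either from the explicit affine description in \cite{Br} (a basis of global logarithmic forms forcing the weights to be maximal) or from a weight/degeneration analysis of the Gysin sequences attached to the stratification above.

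Granting the stratum identity, it remains to prove the purely formal statement — the ``inversion of series'' of the title — that $f_\delta$ is the compositional inverse of $f$. Writing $f(x)=x\bigl(1-\sum_{m\ge3}e(\Mm{0}{m})(q)\,x^{m-2}\bigr)$ and applying Lagrange inversion expresses the coefficients of $f^{-1}$ as a weighted sum over planar rooted trees, each vertex contributing the Poincar\'e polynomial $e(\Mm{0}{m})(q)$ of the associated cell. Planar rooted trees are dual to dissections of a polygon, giving a weight-preserving bijection between the terms of this tree sum and the terms of the stratum sum, whence $f(f_\delta(x))=x$; the identity $f_\delta(f(x))=x$ then follows because a power series with unit linear coefficient has a two-sided compositional inverse. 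To pin down all signs and normalisations I would check the first cases directly, namely $\Mmd{0}{4}=\Ab^1$ giving the coefficient $q$, and $\Mmd{0}{5}$ giving $q^2+1$, against the corresponding coefficients of $f^{-1}$.
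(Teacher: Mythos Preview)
Your proposal is correct and follows essentially the same route as the paper: the stratification of $\Mmd{0}{n}$ by polygon dissections (the paper's equation \eqref{Mstrat}), the purity argument (the paper's Section~3, via smoothness and affineness from \cite{Br} together with the known purity of $H^\bullet(\Mm{0}{n})$), and then the recognition of the resulting stratum sum as the Lagrange inversion formula for $f$ (the paper's equation \eqref{count} and the proof in Section~5). Your tree/dissection duality is exactly the ``combinatorial interpretation of Lagrange's formula'' the paper invokes, citing \cite{MF} for the explicit coefficients $P(\lambda)$.
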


Using the well-known  formula
\begin{equation} \label{eulopen} e(\Mm{0}{n+1})(q) = \prod_{i=2}^{n-1} (q-i)
\end{equation} 
and the purity of $\Mmd{0}{n+1}$, 
we deduce a recurrence
relation for $e(\Mmd{0}{n+1})$ from ($\ref{mainid})$, and hence also
for the Betti numbers $a_{n,i}$. The proof of equation 
$(\ref{mainid})$ uses the fact that the coefficients in the Lagrange
inversion formula are precisely given by the combinatorics  of
Stasheff polytopes, which in turn determine the structure of the
mixed Tate motive underlying $\Mmd{0}{n}$.

In the  special case $q=0$, the series $f(x)$ reduces to
$x-\sum_{n=2}^\infty (n-1)! \,x^n$, which is essentially the
generating series for the  operad $\mathfrak{Lie}$. Comparing equation 
\eqref{mainid} to Lemma~8 in \cite{ST} we find that the dimensions
$a_{n,n-3}=H^{n-3}(\Mmd{0}{n};\Q)$ are precisely the numbers of
prime generators for $\mathfrak{Lie}$. We expect that there should
be an explicit bijection between $H^{n-3}(\Mmd{0}{n};\Q)$ and the
set of prime generators described in
the proof of Proposition~6 
in \cite{ST}, and, more generally, an operad-theoretic
interpretation of equation
\eqref{mainid} 
for all $q$.

\begin{rmk} The numbers
$a_{n,n-3}$ count the number of  convergent  period integrals  on
the moduli space $\Mm{0}{n}$  defined in \cite{BCS},  called
`cell-zeta values'. Specifically, there is a connected component $X_{\delta}$
of the set of real points $\Mm{0}{n}(\R)\subset \Mmd{0}{n}(\R)$
whose closure $\overline X_\delta \subset \Mmd{0}{n}(\R)$ is a compact
manifold with corners, and is combinatorially  a Stasheff polytope.
For any $\omega \in H^{n-3}(\Mmd{0}{n};\Q)$, one can consider the
integral
$$I(\omega)= \int_{X_\delta} \omega \in \R\ ,$$
which is the period of a framed mixed Tate motive over $\Z$, see 
\cite{G-M}. By a theorem in \cite{Br}, the number $I(\omega)$ is a
$\Q$-linear combination of multiple zeta values. For example, when
$n=5$, we have $a_{5,2} = 1$, and there is essentially a unique such
integral. Identifying $\Mm{0}{5}$ with $\{(t_1,t_2)\in
(\mathbb{P}^1\backslash \{0,1,\infty\})^2, t_1\neq t_2\}$, we can
write $I(\omega)$  as
$$\int_{0<t_1<t_2<1 } \frac{dt_1 dt_2}{(1-t_1)t_2}  = \zeta(2) \ .$$
\end{rmk}
\smallskip

This work was begun at Institut Mittag-Leffler, Sweden, during the
year 2006-2007 on moduli spaces. We thank the institute for the
hospitality.

\section{Geometry of $\Mmd{0}{n}$}
We recall some geometric properties of $\Mmd{0}{n}$ from \cite{Br}.
The set of real points $\Mm{0}{n}(\R)$ is not 
connected but has $n!/2n$ components, and they can be indexed by the set
of dihedral structures\footnote{A dihedral structure on a set $S$ is an
identification of the elements
of $S$ with the edges of an unoriented polygon, i.e.,
considered modulo dihedral symmetries.} $\delta$ on the set $\{1,\ldots,n\}$.
Let $X_{\delta}$
denote one such connected  component. Its closure in the real
moduli space
$$\overline{X}_\delta \subset \Mmb{0}{n}(\R)$$
is a compact manifold with corners. The variety $\Mmd{0}{n}\subset
\Mmb{0}{n}$ is then defined to be the complement
$\Mmb{0}{n}\backslash A_\delta$, where $A_\delta$ is the set of all
irreducible divisors $D\subset \Mmb{0}{n}\backslash \Mm{0}{n}$ which
do not meet the closed cell $\overline{X}_\delta$. Conversely, every
irreducible divisor $D\subset \Mmb{0}{n}\backslash\Mm{0}{n}$ which
does meet the closed cell $\overline{X}_\delta$, defines an
irreducible divisor $D\cap \Mmd{0}{n} \subset\Mmd{0}{n} \backslash
\Mm{0}{n}$. In the case $n=4$, we have: 
$$\Mm{0}{4}\cong \mathbb{P}^1\backslash \{0,1,\infty\} \ , \ \Mmd{0}{4} \cong \mathbb{P}^1\backslash \{\infty\} \ , \ \Mmb{0}{4}\cong \mathbb{P}^1\ ,$$
where $X_\delta$ is the open interval $(0,1)$ and $\overline{X}_\delta$ is the
closed interval $[0,1]$.


In the case $n=5$, one can take four points in general position in $\mathbb{P}^2$ and identify 
$\Mm{0}{5}$ with the complement of a configuration of six lines passing through each pair of points. The compactification $\Mmb{0}{5}$ is obtained by blowing up these four points, giving a total of ten boundary divisors. Picturing $\mathbb{P}^2$ minus the six lines 
one sees that the set of real points $\Mm{0}{5}(\mathbb{R})$ has exactly 12 connected components which are triangles. Choosing one of these components  $X_{\delta}$,  
and blowing up only the two points which meet $X_{\delta}$ yields a space  in which the boundary divisors incident to  $X_{\delta}$  form a pentagon. The space $\Mmd{0}{5}$ is obtained by removing all divisors of $\Mmb{0}{5}$ except the pentagon which bounds $\overline{X}_\delta$.
 Thus we obtain twelve isomorphic varieties $\Mmd{0}{5}$, one for each connected component of $\Mm{0}{5}(\mathbb{R})$.
 
In general, $\overline{X}_{\delta}\subseteq \Mmd{0}{n}(\mathbb{R})$ has the 
combinatorial structure of a Stasheff polytope. Its faces of
codimension $k$ are in bijection
with the set of decompositions 
of a regular $n$-gon into $k+1$
polygons (with at least 3 sides) by $k$ non-intersecting chords.
Suppose,  for each $i\geq 1$, that there are $\lambda_i(D)$ polygons in
a decomposition $D$ 
which has $i+2$ sides.
Then the corresponding face is
$$F_D\cong\prod_{i=1}^{n-2} \prod_{j=1}^{\lambda_i(D)} \overline{X}_{i+2}\ ,$$ 
and $\overline{X}_{i+2}$ has itself the combinatorial structure of a Stasheff
$i$-polytope. 
Note that $\overline{X}_3$ and $\Mm{0}{3}$ are just
points. 
Since a closed polytope is the disjoint union of its open faces, we
deduce the following stratification for $\Mmd{0}{n}$:
\begin{equation}\label{Mstrat}
\Mmd{0}{n}=\coprod_{D} i_D \Big( \prod_{i=1}^{n-2} \prod_{j=1}^{\lambda_i(D)}
\Mm{0}{i+2}\Big)\ .\end{equation} Here, the disjoint
union is taken over all decompositions 
$D$ of a regular $n$-gon, and
$i_D$ is the isomorphism which restricts to the inclusion of each
face $F_D\hookrightarrow  \overline{X}_\delta$. The empty dissection
corresponds to the inclusion of the open stratum $\Mm{0}{n}$.

\begin{exa} There are nine chords in a regular hexagon, six of which
decompose it into a pentagon and trigon, and three of which
decompose it into two tetragons. 
It then has  21 decompositions into
three pieces (a tetragon and two triangles), and 14 into four triangles.  Therefore equation $(\ref{Mstrat})$ can be abbreviated:
\begin{equation}\label{ex6}
\Mmd{0}{6}=\Mm{0}{6}\cup\Big( 6\,\Mm{0}{5}\cup 3\,\Mm{0}{4}^2\Big)
\cup 21 \,\Mm{0}{4} \cup 14\,\Mm{0}{3}\ .\end{equation}
\end{exa}

\section{Purity} 
Since $\Mmd{0}{n}$ is stratified by products of 
varieties $\Mm{0}{r}$, which are isomorphic to an affine complement
of hyperplanes and therefore of Tate type, it follows
that $H^{i}(\Mmd{0}{n})$ defines an element in the category  of
mixed Tate motives over $\Q$. In fact, it was proved in \cite{Br}
that $\Mmd{0}{n}$ is smooth and affine, so it follows by a theorem due to Grothendieck that its cohomology is generated by global regular forms.
Using the   well-known fact that $H^{i}(\Mm{0}{n})$ is pure \cite{Ge}, it follows that the subspace $H^{i}(\Mmd{0}{n})$ is also pure. We
 can therefore work inside the semisimple subcategory (or Grothendieck group) generated by pure Tate
 motives.
We have that, 
\begin{equation}\label{pure}
 H^{i}(\Mmd{0}{n})\cong  \Q(-i)^{a_{n,i}}\ .\end{equation}
The purity of the spaces $\Mmd{0}{n}$ has the important consequence that 
we have an equality of Poincar\'e polynomials (i.e. not only of Euler characteristics), 
\begin{equation}\label{EulerC}
e(\Mmd{0}{n})= \sum_{D} \Big( \prod_{i=1}^{n-2} \prod_{j=1}^{\lambda_i(D)} e(\Mm{0}{i+2})\Big).
\end{equation}

\section{Decompositions of regular $n$-gons}
If $\lambda$ is a partition of a number, we define $\lambda_i$ to be
the number of times $i$ appears in this partition. For each
partition $\lambda$ of $n-2$, we then define $P(\lambda)$ to be the
number of choices of $-1+\sum_i \lambda_i$ non-intersecting
chords 
of an $n$-regular polygon 
that gives rise, for each $i$,
to $\lambda_i$ subpolygons  with $i+2$ sides. 
Thus, $P(\lambda)$ counts the number of decompositions of an $n$-gon of
given combinatorial type. This number is found to be equal to (see
Ex. 2.7.14 on p. 127 in \cite{GJ}): 
\begin{equation}\label{Pform}
P(\lambda)=\frac{(n-2+\sum_i
\lambda_i)!}{(n-1)! \, \prod_i (\lambda_i!)}.\end{equation} 

Combining this result and \eqref{EulerC} we find that, 
\begin{equation} \label{count} e(\Mmd{0}{n})=\sum_{\lambda \vdash n-2} P(\lambda) \cdot \prod_{i=1}^{n-2} e(\mathcal{M}_{0,i+2})^{\lambda_i}. \end{equation}
Using equation \eqref{eulopen} we can now compute the $a_{n,i}$'s for any $i$ and $n$, 

\begin{exa}
From  Example~$(\ref{ex6})$, we have
$$e(\Mmd{0}{6}) = (q-2)(q-3)(q-4) + 6(q-2)(q-3)+3(q-2)^2+21(q-2) +14\ ,$$
which reduces to $q^3+5q-4$. In particular, $a_{6,3}=\dim_\Q
H^3(\Mmd{0}{6},\Q)=4$.
\end{exa}
 Clearly $a_{n,0}=1$ for all
$n$, and it is also easy to see that $a_{n,1}=0$ for all $n$.
In the following table we present the results for $n$ from five to eleven.

\begin{table}[htbp] 
\label{table} \centerline{ \vbox{ \offinterlineskip \hrule
\halign{&\vrule#& \quod \hfil#\hfil \strut \quod \cr
height2pt&\omit&&\omit&&\omit&&\omit&&\omit&&\omit&&\omit&&\omit&&\omit&
\cr & && $a_{n,1}$ && $a_{n,2}$ && $a_{n,3}$ && $a_{n,4}$ && $a_{n,5}$ &&
$a_{n,6}$ && $a_{n,7}$ && $a_{n,8}$ &\cr
height2pt&\omit&&\omit&&\omit&&\omit&&\omit&&\omit&&\omit&&\omit&&\omit&
\cr \noalign{\hrule}
height2pt&\omit&&\omit&&\omit&&\omit&&\omit&&\omit&&\omit&&\omit&&\omit&
\cr &$\Mmd{0}{5}$ && 0 && 1  &&   && &&  && && && &\cr
height2pt&\omit&&\omit&&\omit&&\omit&&\omit&&\omit&&\omit&&\omit&&\omit&
\cr &$\Mmd{0}{6}$ && 0 && 5  && 4  &&  && && && && &\cr
height2pt&\omit&&\omit&&\omit&&\omit&&\omit&&\omit&&\omit&&\omit&&\omit&
\cr &$\Mmd{0}{7}$ && 0 &&  15  && 28  && 22  && && && && &\cr
height2pt&\omit&&\omit&&\omit&&\omit&&\omit&&\omit&&\omit&&\omit&&\omit&
\cr &$\Mmd{0}{8}$ && 0 && 35  && 112  && 206 && 144 && && && & \cr
height2pt&\omit&&\omit&&\omit&&\omit&&\omit&&\omit&&\omit&&\omit&&\omit&
\cr &$\Mmd{0}{9}$ && 0 && 70  && 336  && 1063  && 1704 && 1089 && &&
&\cr
height2pt&\omit&&\omit&&\omit&&\omit&&\omit&&\omit&&\omit&&\omit&&\omit&
\cr &$\Mmd{0}{10}$ && 0 && 126  && 840  && 3999  && 10848 && 15709 &&
9308 && &\cr
height2pt&\omit&&\omit&&\omit&&\omit&&\omit&&\omit&&\omit&&\omit&&\omit&
\cr &$\Mmd{0}{11}$ && 0 && 210  && 1848  && 12255  && 49368 && 119857 &&
159412 && 88562&\cr
height2pt&\omit&&\omit&&\omit&&\omit&&\omit&&\omit&&\omit&&\omit&&\omit&
\cr } \hrule}}
\end{table}

\noindent There are no entries above the diagonal, because
$\Mmd{0}{n}$ is affine. For small $i$, one can use $(\ref{count})$
to write down explicit formulae for $a_{n,i}$ as a function of $n$, e.g.,
$$a_{n,2}= \binom{n-1}{4}\quad \hbox{ and } \quad a_{n,3}=4\binom{n}{6}.$$
Finally, setting $q=0$ in $(\ref{count})$ gives the following closed
formula for the dimension $a_{n,l}$ of the middle-dimensional de
Rham cohomology of $\Mmd{0}{n}$, where $l:=n-3$,
\begin{equation} \label{diagterm}
\dim_\Q H^l(\Mmd{0}{n};\Q)=\sum_{\lambda \vdash l+1}
P(\lambda) \cdot \prod_{i=1}^{l+1}
\Big((-1)^{l}(l+1)!\Big)^{\lambda_i}. 
\end{equation}

\section{An inversion formula}
\begin{proof}[Proof of theorem \ref{invthm}]
The proof is immediate on comparing equation \eqref{count} with the
 combinatorial interpretation  of Lagrange's formula for
the inversion of series in one variable (see \cite{MF}, equation
(4.5.12), p. 412). More precisely, consider the formal power series:
$$u(x) = x-\sum_{i=2}^{\infty} u_i x^i$$
Lagrange's formula states that the formal solution to $v(u(x))=x$ is
given by
$$v(x) = x+\sum_{i=2}^{\infty} v_i x^i $$
where $v_2 = u_2$, $v_3 = 2 u_2^2+u_3$, $v_4 =5u_2^3+5u_2u_3+u_4$, and in general:
$$  v_n = \sum_{\lambda \vdash n-1}  P(\lambda) \cdot \prod_{i=1}^{n-1} u_{i+1}^{\lambda_i}\ , \quad \hbox{ for } n\geq 2\ .$$ 
The theorem follows from $(\ref{count})$ on setting $u_i=e(\Mm{0}{i+1})$.
\end{proof}

\begin{rmk} There is a stratification of $\Mmb{0}{n}$ similar to the one described by \eqref{Mstrat} for 
$\Mmd{0}{n}$, but where $P(\lambda)$ should be replaced by $T(\lambda)$, 
and where $T(\lambda)$ is the number of dual graphs of $n$-pointed stable curves of genus zero 
that has $\lambda_i$ components with a sum of $i+2$ marked points and nodes. 
Now note that from the proof of theorem~\ref{invthm} and \eqref{GK} it follows that 
$T(\lambda)=P(\lambda) \cdot (n-1)!/\prod_i (i+1)!^{\lambda_i}$.
\end{rmk}

In the special case when $q=0$, we deduce the  following corollary.
\begin{cor} The generating series for the dimensions $\dim_\Q
H^{n-3}(\Mmd{0}{n};\Q)$ is obtained by inverting the
series
$$\sum_{n=1}^\infty (n-1)!\, x^n = x+x^2+2 x^3+ 6x^4+\ldots $$
\end{cor}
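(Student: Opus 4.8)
The plan is to derive the corollary simply by specialising Theorem~\ref{invthm} to $q=0$ and then conjugating by the substitution $x\mapsto -x$. First I would set $q=0$ in the inversion formula \eqref{mainid}, obtaining $f(f_\delta(x))=f_\delta(f(x))=x$ as an identity of power series whose coefficients are the values of $e(\Mm{0}{n+1})$ and $e(\Mmd{0}{n+1})$ at $q=0$.

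Next I would evaluate those coefficients. For the open moduli spaces, \eqref{eulopen} gives $e(\Mm{0}{n+1})(0)=\prod_{i=2}^{n-1}(-i)=(-1)^{n}(n-1)!$, so that $f(x)|_{q=0}=x-\sum_{n\ge 2}(-1)^{n}(n-1)!\,x^{n}$. For the spaces $\Mmd{0}{n+1}$, which are affine of dimension $n-2$, I would invoke purity \eqref{pure}: in the Poincar\'e polynomial $e(\Mmd{0}{n+1})(q)=\sum_{i}(-1)^{i}a_{n+1,i}\,q^{(n-2)-i}$ every term with $i<n-2$ carries a positive power of $q$ and hence vanishes at $q=0$, while Artin vanishing on the affine variety kills all cohomology above degree $n-2$, so that no negative powers of $q$ occur. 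Thus only the top class survives, $e(\Mmd{0}{n+1})(0)=(-1)^{n}a_{n+1,n-2}$, where $a_{n+1,n-2}=\dim_\Q H^{(n+1)-3}(\Mmd{0}{n+1})$ is exactly the middle-dimensional Betti number named in the statement. Hence $f_\delta(x)|_{q=0}=x+\sum_{n\ge 2}(-1)^{n}a_{n+1,n-2}\,x^{n}$.

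Both specialised series carry alternating signs, so to reach the series $\sum_{n\ge 1}(n-1)!\,x^{n}$ of the statement I would conjugate by the involution $\sigma(x)=-x$. Setting $F:=\sigma\circ f|_{q=0}\circ\sigma$ and $F_\delta:=\sigma\circ f_\delta|_{q=0}\circ\sigma$, the factor $(-1)^{n}$ in each coefficient cancels against the factor $(-1)^{n}$ produced by $(-x)^{n}$, giving $F(x)=\sum_{n\ge 1}(n-1)!\,x^{n}$ and $F_\delta(x)=x-\sum_{n\ge 2}a_{n+1,n-2}\,x^{n}$. Since conjugation by $\sigma$ preserves compositional inverses, the relation $f_\delta|_{q=0}\circ f|_{q=0}=\mathrm{id}$ becomes $F_\delta\circ F=\mathrm{id}$ (and symmetrically $F\circ F_\delta=\mathrm{id}$). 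Reading off the coefficients of $F_\delta$ then identifies the coefficients of the compositional inverse of $\sum_{n\ge 1}(n-1)!\,x^{n}$, up to sign, with the dimensions $\dim_\Q H^{n-3}(\Mmd{0}{n})$, which is the assertion.

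The argument is essentially bookkeeping, and I expect the only genuinely delicate point to be the specialisation at $q=0$ for $\Mmd{0}{n+1}$: one must use purity together with affineness (Artin vanishing) to be certain that the constant term of the Poincar\'e polynomial detects precisely the middle-dimensional cohomology and nothing else. The second place requiring care is the simultaneous tracking of the two independent sources of signs—the $(-1)^{n}$ coming from the evaluation at $q=0$ and the $(-1)^{n}$ introduced by $x\mapsto -x$—which must, and do, cancel exactly.
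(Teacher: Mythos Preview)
Your argument is correct and follows the same route as the paper: the corollary is obtained by specialising Theorem~\ref{invthm} at $q=0$, and the paper presents it without further proof beyond that remark. Your explicit bookkeeping of the two sign sources and your appeal to purity and affineness (to isolate the top Betti number at $q=0$) are exactly the points one must check, and you have handled them correctly; the conjugation $F:=-f(-{\cdot})$, $F_\delta:=-f_\delta(-{\cdot})$ that you introduce is the same one the paper uses in Section~6.
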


\begin{rmk}
The cohomology of $\Mm{0}{n}$ is a module over the symmetric group
$\mathfrak{S}_n$ with $n$ elements, whose representation theory can for
instance be found in \cite{Ge} or \cite{K-L}. The  dihedral subgroup $D_{2n}$ which
stablizes a dihedral ordering $\delta$ acts upon the  affine space
$\Mmd{0}{n}$, and hence its cohomology.  It therefore would be
interesting to compute the character of this group action on
$H^\bullet(\Mmd{0}{n})$, and compare its equivariant generating
series to the one obtained by restriction
$\mathrm{Res}^{\mathfrak{S}_n}_{D_{2n}} H^\bullet(\Mm{0}{n})$.
\end{rmk}

\section{A recurrence relation}
Let us alter our series slightly and put $F(x):=-f(-x)$ and $F_{\delta}(x):=-f_{\delta}(-x)$. 
By theorem~\ref{invthm} we find that $F_{\delta} (F(x))=F(F_{\delta}(x))=x$. 
The series $F(x)$ is easily seen to satisfy the differential equation:
$$x^2F'(x) = (F(x) -x)(xq+1)\ .$$
By differentiating  $F_{\delta}(F(x))=x$, we have
$F_{\delta}'(F(x))F'(x)=1$. Substituting the previous expression for $F'(x)$ gives:
$$F_{\delta}'(F(x)) (F(x)-x)(xq+1) = x^2\ .$$
By changing variables $y=F(x)$, where $F_{\delta}(y)=F_{\delta}(F(x))=x$, we obtain:
$$F_{\delta}'(y)(y-F_{\delta}(y))(q\,F_{\delta}(y)+1) = F_{\delta}(y)^2\ .$$
Expanding out gives:
$$yF_{\delta}' -F_{\delta}F_{\delta}'-F_{\delta}^2+qyF_{\delta}F_{\delta}'-qF_{\delta}^2F_{\delta}'=0\ .$$
If we write
$$F_{\delta}(y) = \sum_{n=1}^\infty a_n y^n\ ,$$
then the coefficient of $y^n$ is exactly:
$$na_n - \sum_{k+l=n+1} ka_ka_l -\sum_{k+l=n}a_ka_l+q\sum_{k+l=n}
ka_ka_l -q \sum_{k+l+m=n+1} ka_ka_la_m=0\ .$$ Decomposing the first
sum $\sum_{k+l=n+1} ka_ka_l=(n+1)a_1a_n +\sum_{k=2}^{n-1}
ka_ka_{n+1-k}$, and using the fact that $a_1=1$, gives the
recurrence relation:
$$a_{n} = -\sum_{k+l=n+1,k,l\geq 2} ka_ka_l +\sum_{k+l=n}
(qk-1)a_ka_l -q \sum_{k+l+m=n+1} ka_ka_la_m\ .$$

\begin{thm}
The  recurrence relation
$$a_{n} = -\sum_{\substack{k+l=n+1 \\ k,l\geq 2}} ka_ka_l +\sum_{k+l=n}
(qk-1)a_ka_l -q \sum_{k+l+m=n+1} ka_ka_la_m\ ,$$ with initial
conditions $a_0=0$, $a_1=1$,   has a unique solution
given by
$$a_n=(-1)^{n+1} e(\Mmd{0}{n+1}) .$$

\end{thm}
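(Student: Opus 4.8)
The plan is to establish the two halves of the statement separately: that the proposed sequence is \emph{a} solution, and that the recurrence together with the initial data pins down \emph{at most one} sequence. I would begin with uniqueness, since it is purely structural. Inspecting the three sums on the right-hand side, every summation index runs over positive integers whose total is $n$ or $n+1$ (any term involving $a_0$ drops out because $a_0=0$); hence each index is at most $n-1$. Consequently the right-hand side is a polynomial in $a_1,\dots,a_{n-1}$ and $q$ only, with no occurrence of $a_n$. Given $a_0=0$ and $a_1=1$, this determines $a_2,a_3,\dots$ successively and uniquely by induction on $n$. I would stress that the absence of $a_n$ on the right is not automatic: it is exactly the cancellation $na_n-(n+1)a_n=-a_n$ produced when the $k=1$ and $l=1$ contributions of the first sum are split off, as in the derivation preceding the statement. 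This is the one place where a sign error would be fatal, so I would recheck it carefully.

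For existence, I would exhibit the claimed sequence as the Taylor coefficients of $F_{\delta}(y)=-f_{\delta}(-y)$. Substituting the definition of $f_{\delta}$ gives $F_{\delta}(y)=y+\sum_{n\geq 2}(-1)^{n+1}e(\Mmd{0}{n+1})\,y^n$, so its coefficient of $y^n$ is precisely $(-1)^{n+1}e(\Mmd{0}{n+1})$, with constant term $0$ and linear term $1$, matching the initial conditions. It then remains to confirm that these coefficients obey the recurrence, and this is exactly the content of the computation carried out just above the statement: the recurrence was obtained by expanding the functional equation $F_{\delta}(F(x))=x$ (equivalently, Theorem~\ref{invthm}) into the differential relation $F_{\delta}'(y)(y-F_{\delta}(y))(qF_{\delta}(y)+1)=F_{\delta}(y)^2$ and comparing coefficients of $y^n$. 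Hence the coefficients of $F_{\delta}$ solve the recurrence, and by the uniqueness established above they are the only solution.

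The one genuine computation feeding this argument, and the step I expect to be the real (if modest) obstacle, is the differential equation $x^2F'(x)=(F(x)-x)(xq+1)$ for $F(x)=-f(-x)$. I would verify it by writing $F(x)=x+\sum_{n\geq 2}(-1)^n c_n x^n$ with $c_n:=e(\Mm{0}{n+1})$ and matching coefficients: the $x^2$ coefficient reduces to $c_2=e(\Mm{0}{3})=1$ (recall $\Mm{0}{3}$ is a point), while for $m\geq 3$ the relation collapses to $c_m=(q-(m-1))\,c_{m-1}$, which is immediate from the product formula \eqref{eulopen}, since $\prod_{i=2}^{m-1}(q-i)=(q-(m-1))\prod_{i=2}^{m-2}(q-i)$. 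With this differential equation in hand, the passage to the $F_{\delta}$-relation via the chain rule applied to $F_{\delta}(F(x))=x$ and the change of variable $y=F(x)$ is formal, so nothing beyond careful bookkeeping is needed to close the argument.
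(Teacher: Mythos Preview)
Your proposal is correct and follows essentially the same route as the paper: the theorem is stated after, and as a summary of, the derivation in Section~6, which proceeds exactly via the differential equation $x^2F'(x)=(F(x)-x)(xq+1)$, the chain rule applied to $F_{\delta}(F(x))=x$, the substitution $y=F(x)$, and coefficient comparison. You have simply made explicit the uniqueness argument and the verification of the differential equation (which the paper leaves as ``easily seen''), but the underlying argument is the same.
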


In the special case $q=0$, we have the following corollary. Note that in theorem~9 of \cite{ST} there is an equivalent presentation of this recurrence relation. 

\begin{cor} The dimensions $b_n:=\dim_\Q
H^{n-2}(\Mmd{0}{n+1};\Q)$ are the unique solutions to the recurrence
relation:
 $$ b_{n} = \sum_{\substack{k+l=n+1\\ k,l\geq 2}} k\,b_kb_l +\sum_{k+l=n}
b_kb_l\ , \hbox{ for} \quad n\geq 2\ ,$$
with initial conditions $b_0=0, b_1=-1$.
\end{cor}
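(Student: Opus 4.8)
The plan is to derive this corollary purely as the $q=0$ specialisation of the theorem just proved, rather than to argue it from scratch. First I would set $q=0$ in the recurrence of the theorem: the cubic term $-q\sum_{k+l+m=n+1}ka_ka_la_m$ vanishes identically, and the factor $(qk-1)$ in the middle sum collapses to $-1$, leaving the reduced recurrence
$$a_n = -\sum_{\substack{k+l=n+1\\ k,l\geq 2}} k\,a_k a_l \;-\; \sum_{k+l=n} a_k a_l\ ,$$
with the same initial data $a_0=0$, $a_1=1$.

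The crux is to relate the specialised quantity $a_n|_{q=0}$ to $b_n$. By the definition of the Poincar\'e polynomial, $e(\Mmd{0}{n+1})(q)=\sum_i (-1)^i a_{n+1,i}\,q^{(n-2)-i}$, since $\Mmd{0}{n+1}$ has dimension $n-2$. As noted above, affineness forces $a_{n+1,i}=0$ for $i>n-2$, so evaluating at $q=0$ retains only the top-degree term $i=n-2$; thus $e(\Mmd{0}{n+1})(0)=(-1)^{n-2}a_{n+1,n-2}=(-1)^n b_n$. Combined with $a_n=(-1)^{n+1}e(\Mmd{0}{n+1})$ this gives $a_n|_{q=0}=-b_n$, and in particular the initial conditions transport correctly: $a_0=0$ yields $b_0=0$ and $a_1=1$ yields $b_1=-1$.

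Finally I would substitute $a_k=-b_k$ into the reduced recurrence. Every monomial on the right is quadratic in the $a$'s, so $a_ka_l=b_kb_l$ and both sides acquire an overall minus sign; clearing it produces exactly
$$b_n = \sum_{\substack{k+l=n+1\\ k,l\geq 2}} k\,b_k b_l + \sum_{k+l=n} b_k b_l\ .$$
Uniqueness carries over directly: in the first sum the condition $k,l\geq 2$ forces $k,l\leq n-1$, and in the second sum the terms with a vanishing index drop out because $b_0=0$, so the right-hand side depends only on $b_k$ with $k<n$; hence the recurrence together with $b_0=0$, $b_1=-1$ pins down the $b_n$ uniquely. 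I expect the only genuine difficulty to be the bookkeeping of the sign $(-1)^{n+1}$ against the three interlocking shifts — the moduli label $n+1$, the sequence index $n$, and the cohomological degree $n-2$. The quadratic homogeneity of the reduced recurrence is what makes the sign flip come out cleanly, while the vanishing of the cubic term at $q=0$ removes the last potential complication.
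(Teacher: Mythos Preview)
Your proof is correct and follows exactly the same approach as the paper, which simply says ``Set $b_n=-a_n|_{q=0}$ in the previous theorem.'' You have merely unpacked the sign bookkeeping that the paper leaves implicit, verifying via the Poincar\'e polynomial and affineness that $a_n|_{q=0}=-b_n$ and that the quadratic homogeneity of the $q=0$ recurrence makes the sign flip go through cleanly.
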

\begin{proof} Set $b_n=-a_n|_{q=0}$ in the previous theorem.
\end{proof}

\bibliographystyle{plain}

\end{document}